\newcommand{\ii}{{\rm i}}
\newtheorem*{lemma*}{Lemma}
\newtheorem*{theorem*}{Theorem}
\newtheorem{theorem}[subsection]{Theorem}
\newtheorem*{proposition*}{Proposition}
\newtheorem{proposition}[subsection]{Proposition}
\newtheorem*{corollary*}{Corollary}
\theoremstyle{definition}
\newtheorem*{definition*}{Definition}
\newtheorem{definition}[subsection]{Definition}
\newtheorem*{example*}{Example}
\theoremstyle{remark}
\newtheorem*{remark*}{Remark}
\newtheorem{remark}[subsection]{Remark}
\newcommand{\C}{\mathbb C}
\renewcommand{\phi}{\varphi}
\newcommand{\be}{\begin{enumerate}}
\newcommand{\ee}{\end{enumerate}}
\title{Some special bases of the 2--swap algebras.}
\author{Claudio Procesi}
\begin{document}\address{Dipartimento di Matematica, G. Castelnuovo,
Universit\`a di Roma La Sapienza, piazzale A. Moro,  00185,
Roma, Italia}
 \thanks{I wish to thank Felix Huber for pointing out the problem and some of the literature.} 

 \begin{abstract}
We study the algebra $\Sigma_n$  induced by the action of the symmetric group $S_n$ on $V^{\otimes n}$ when $\dim V=2$. 

Our main result is that the space of symmetric elements  of  $\Sigma_n$ is linearly spanned  by the involutions of $S_n$.
\end{abstract}

\email{
     procesi@mat.uniroma1.it}
 \bigskip
\maketitle
\section{Introduction}  Let $V$ be a vector space of finite dimension $d$ over some field $F$  (in our computations it is convenient to take $F=\mathbb Q$ the rational numbers or $\C$ the complex numbers).  

In the classical theory of Schur--Weyl  a major role is played by the action of the symmetric group $S_n$ on $n$ elements on the $n^{th}$ tensor power $V^{\otimes n}$ by exchanging the tensor factors. The algebra of operators on $V^{\otimes n}$, generated  by these permutations will be denoted  by $\Sigma_n(d)$  and called {\em a  $d$--swap algebra}.  It is the algebra formed by the elements which commute with the diagonal action of $GL(V)$ or, if $V$ is a Hilbert space, by the corresponding unitary group.

          The name comes from the use, in the physics literature, to call {\em swap}  the exchange operator $(1,2)$ on $V^{\otimes 2}$. 
          
           In the literature on quantum information theory  the states lying in  $\Sigma_n(d)$ are called {\em Werner states}   and  widely used as source of examples, due to fundamental work of the physicist R. F. Werner \cite{RW}.   
          
          See for instance \cite{TR},\  \cite{WC},\  \cite{HK},\ \cite{EW} for applications to separability, entanglements or the quantum max-cut problem.

\medskip

A  classical theorem  states   that the corresponding algebra homomorphism  $F[S_n]\to \Sigma_n(d)\subset End(V^{\otimes n})$ is injective if and only if  $\dim V\geq n$. \smallskip

When $d=\dim V< n$ the kernel of this map is the two sided ideal of $F[S_n]$ generated by the antisymmetrizer
$$A_{d+1}:=\sum_{\sigma\in S_{d+1}}\epsilon_\sigma \sigma,\quad \epsilon_\sigma \ \text{the sign of the permutation}. $$
If $F$ has characteristic 0 the algebra $F[S_n]$ decomposes as direct sum of matrix algebras indexed by partitions, corresponding to the irreducible representations of $S_n$. As for  $\Sigma_n(d)$  only the blocks relative to partitions of height $\leq d$ survive.\medskip

In the case $d=\dim V< n$ an interesting problem is to describe a basis of  $\Sigma_n(d)$ formed by permutations.  In fact in the physics literature there are several examples of Hamiltonians lying in  $\Sigma_n(d)$.  Thus it may be convenient to express such Hamiltonian in a given special basis,\smallskip

 Moreover  in $\Sigma_n(d)$  we have the involution $g\mapsto g^{-1}$, if $V$ is a Hilbert space this coincides with adjuction.    So we would like also   to describe a basis of  $\Sigma_n(d)^+$ the subspace of symmetric elements made by permutations and also a basis of self adjoint operators.\medskip

In \cite{P}  I have proved that a possible basis of  $\Sigma_n(d)$ is formed by the permutations which are $d+1$--good.   

By definition a   permutation $g$ is $d+1$--good if and only if it does not contain a decreasing subsequence of length  $d+1$. \smallskip

By a beautiful Theorem of Schensted  this is equivalent to the fact that the pair of tableaux  associated to  $g$ is of height $\leq d$.  

This of course, by classical theory,  is  exactly the dimension of  $\Sigma_n(d)$ so it is enough to prove that such permutations span  $\Sigma_n(d)$ and this, in  \cite{P}  is done by a {\em straightening algorithm}  deduced from the relations.  

Since the pair of tableaux  associated to $g^{-1}$ by the Robinson--Schensted correspondence is obtained by exchanging that  associated to $g $, it follows  that if $g$ is $d+1$--good so is   $g^{-1}$, and this gives  also a basis  $g+g^{-1}$, where $g$ is $d+1$--good,  for the symmetric elements. 

We  have also  the basis  $g-g^{-1}$ for the antisymmetric elements, from which we have a basis over the real numbers for self adjoint operators   given by $g+g^{-1}$and $i(g-g^{-1})$ , where $g$ is $d+1$--good.\medskip

 On the other hand,  specially for $d=2$,  one may want to find a basis  formed by simpler type of elements.  For this discussion the simplest elements are the elements of order 2 (called {\em involutions}) which are permutations with cycles only of order 2,1 and eigenvalues only $\pm 1$.  \smallskip
 
 We call $\Sigma_n(2)$ the {\em $n$--swap algebra} and denote it simply $\Sigma_n.$  It is known that $\dim \Sigma_n=C_n$ the $n^{th}$ Catalan number, see \S\ref{Ca} for a simple proof.   
 
 The list of the first 10  Catalan numbers is
 $${1, 2, 5, 14, 42, 132, 429, 1430, 4862, 16796}$$

\begin{definition}\label{spe}
The set $\mathcal S$ of {\em special permutations}   is formed by the involutions and also by the permutations with cycles only of order 2,1 plus a single cycle of order 3.
\end{definition}
The 3 cycle can be further normalised to be increasing.\medskip

Our main Theorem is the following
\begin{theorem}\label{main}\begin{enumerate}\item For each $n$ the algebra  $\Sigma_n$ has a basis formed by special elements.

\item $\Sigma_n^+$ has a basis  over $\mathbb C$ formed by involutions.
 \item The space  of real and symmetric elements  has a basis  over $\mathbb R$ formed by involutions.\end{enumerate}

\end{theorem}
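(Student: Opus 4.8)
The plan is to deduce all three statements from part (1), using the Robinson--Schensted description of the basis of $3$--good permutations from \cite{P} together with the single relation coming from $A_3=0$. The organising remark is that the involution $g\mapsto g^{-1}$ preserves $3$--goodness (Robinson--Schensted exchanges the two tableaux), so it permutes the basis of $3$--good permutations, fixing the involutions among them and pairing the rest; consequently $\Sigma_n=\Sigma_n^+\oplus\Sigma_n^-$ splits compatibly and it suffices to control the symmetric side.

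First I would prove (1), that the special permutations span $\Sigma_n$ (a spanning set of permutations always contains a basis). The engine is the image of the antisymmetrizer, which on any three indices $a,b,c$ reads
$$(a\,b\,c)+(a\,c\,b)=(a\,b)+(a\,c)+(b\,c)-\id\qquad\text{in }\Sigma_n,$$
and, multiplied on either side by an arbitrary permutation, lets one rewrite a cycle of length $\ge 4$, or a second $3$--cycle, in terms of permutations of smaller complexity. Iterating this straightening should reduce every permutation to a combination of special ones. The hard part will be to exhibit a monovariant under which the rewriting strictly decreases: the naive length of the long cycles is not monotone (splitting a $4$--cycle with the relation can reproduce a $4$--cycle), so termination must be controlled by a finer statistic, in the spirit of the straightening algorithm of \cite{P}.

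With (1) in hand, (2) is quick. Since the span of the special permutations is all of $\Sigma_n$ and is stable under $g\mapsto g^{-1}$, the symmetric part $\Sigma_n^+$ is spanned by the involutions together with the sums $s+s^{-1}$ for the special $s=\tau\,(a\,b\,c)$ carrying a $3$--cycle, where $\tau$ is an involution disjoint from $a,b,c$. But the displayed relation, multiplied by $\tau$, gives $s+s^{-1}=\tau(a\,b)+\tau(a\,c)+\tau(b\,c)-\tau$, whose four terms are all involutions; hence the involutions already span $\Sigma_n^+$. To upgrade ``span'' to ``basis'' I would count: from $\Sigma_n\cong\bigoplus_{\mathrm{ht}(\lambda)\le 2}M_{d_\lambda}(\C)$ with $g\mapsto g^{-1}$ acting as transpose on each block (the irreducibles are realised by real orthogonal matrices), one gets $\dim\Sigma_n^+=\sum\binom{d_\lambda+1}{2}=\tfrac12\big(C_n+\binom{n}{\lfloor n/2\rfloor}\big)$, and an elementary Robinson--Schensted count identifies this with the number of involutions in $S_n$; a spanning set of the correct cardinality is a basis.

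Finally, (3) is immediate from (2): each involution is a rational, hence real, symmetric operator, so the involutions lie in the real symmetric subspace and, being a $\C$--basis of $\Sigma_n^+$, they form an $\mathbb{R}$--basis of its real form, whose dimension equals $\dim_{\C}\Sigma_n^+$. I expect the entire weight of the argument to sit in the termination of the straightening in (1); everything after it is the relation above together with a dimension count.
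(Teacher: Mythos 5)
Your reduction of (2) and (3) to (1) matches the paper's, but the proposal has two genuine problems, one fatal and one factual.

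The fatal one is that you do not actually prove (1): you explicitly leave open the termination of the straightening, and that is where essentially all of the work in the paper lies. The paper resolves it in two separate steps, neither of which appears in your sketch. First, long cycles are killed not by blindly multiplying the antisymmetrizer relation, but by an explicit identity for $4$--cycles (Formula \eqref{q}, itself extracted from the three products $(2,4)A4$, $(3,4)A4(2,4)$, $A4(3,4)$) combined with the factorization $(a,A,B)=(a,B)(a,A)$: writing $\mathtt c_p=(1,5,\ldots,p)(1,2,3,4)$ and substituting \eqref{q} for the $4$--cycle produces, after recombining via the same factorization identity, only permutations that are \emph{not} full $p$--cycles, so induction on the maximal cycle length goes through and yields Proposition \ref{se} (every permutation is a combination of permutations with cycles of length $\le 3$). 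Second --- and this is the step your plan omits entirely --- one must still eliminate \emph{products of two disjoint $3$--cycles}: the relation $(a\,b\,c)+(a\,c\,b)=\cdots$ symmetrizes a single $3$--cycle but does not remove a second one, so a permutation of type $(3,3)$ in $S_6$ is not yet special. The paper handles this by a computer-assisted search among the $2400$ relations obtained from $A6$, isolating two specific relations whose difference contains exactly one $(3,3)$--type term (Formula \eqref{prf} and the resulting \eqref{330}); this is the genuinely hard computation, and nothing in your proposal substitutes for it.

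The factual error is in your dimension count for (2). You claim $\dim\Sigma_n^+=\tfrac12\bigl(C_n+\binom{n}{\lfloor n/2\rfloor}\bigr)$ equals the number of involutions $I(n)$ of $S_n$. It does not: by Robinson--Schensted $I(n)=\sum_{\lambda\vdash n}d_\lambda$ runs over \emph{all} partitions, whereas your formula only involves those of height $\le 2$. The paper records that the two sequences agree only for $n\le 7$ (e.g.\ $750$ versus $764$ at $n=8$), so for $n\ge 8$ the involutions are linearly dependent and certainly not all of them form a basis. This does not sink the theorem as stated --- spanning already lets you extract a basis from among the involutions, which is all that is claimed --- but your ``spanning set of the correct cardinality'' argument rests on a false identity and should be replaced by the trivial extraction-of-a-basis step.
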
 
Notice that items (2) and (3)  are equivalent  and follow  from (1).  

 In fact the involutions are symmetric and if a permutation  is of the form   $g=ab$  with one 3 cycle  $a$ and the rest $b$ is a product  2 or 1 cycles   its symmetrization is $(g+g^{-1})=(a+a^{-1})b$.  

If $a$ is a 3 cycle  in the algebra  $\Sigma_3$, by relation  \eqref{uno}, we have that  $a+a^{-1}$  is the sum of  -1 and   3  transpositions. The claim follows. 

In the same way   $(g-g^{-1})=(a-a^{-1})b$  gives bases for antisymmetric elements, and together for self adjoint operators.\medskip

  The dimensions  of the real symmetric elements are, from  $n=1$   to   $n=10$  
$${1, 2, 4, 10, 26, 76, 232, 750, 2494, 8524,\cdots}$$
(see  
 The On-Line Encyclopedia of Integer Sequences 
A007123 for many interesting informations on this sequence).

On the other hand the number $I(n)$ of involutions in $S_n$ from  $n=1$   to   $n=10$ is
$$I(n)={1, 2, 4, 10, 26, 76, 232, 764, 2620, 9496,\cdots}$$  which is also equal  (by the Robinson--Schensted correspondence) to the number of standard Young tableaux with $n$ cells (O.E.I.S A000085).

So a curious fact is that these two sequences coincide up to $n=7$. \smallskip

We have thus that the involutions are a basis of the real symmetric elements  for $n\leq 7$  and after that  they have linear relations.  It would be interesting to understand these relations. 

The antisymmetric elements   are spanned by  elements  of type  $ab-a^{-1}b$  with $a$ a 3 cycle, which we can assume to be in increasing order, and $b$ an involution.  There are $\binom n3$ such 3 cycles in $S_n$ and so  $\binom n3I(n-3)$ such elements.
$$\binom n3I(n-3)=1,4,20,60,350,\ n=3,\cdots,7$$ so there appear relations already for $n=5$   where the dimension of $\Sigma_5$ is 42, while the number of normalised special elements is 46.\medskip

\begin{remark}\label{pat}
The set of special elements has the following compatibility with the {\em partial traces}\quad  $ \mathtt t_i:End(V)^{\otimes n}\to  End(V)^{\otimes n-1},$
$$\mathtt t_i:x_1\otimes\cdots  x_i\otimes \cdots x_n\mapsto tr(x_i)x_1\otimes\cdots  x_{i-1}\otimes x_{i+1}\otimes\cdots \otimes x_n.$$
In fact applied to a permutation  decomposed into cycles $\mathtt t_i$ removes $i$  from the cycle, and so a special element is mapped to a special element,  and moreover, in case $i$ is a fixed element multiplies by the dimension of the space, in our case 2.\smallskip

Notice that  instead  the partial trace of a 3--good element may be 3--bad as for instance $$\mathtt t_4(\{3,4,1,2\}) =\{3,2,1\}$$  where by $\{3,4,1,2\}$ we mean the permutation as string and not as cycle, in cycle form $\{3,4,1,2\}=(1,3)(2,4)\mapsto (1,3)(2 )=\{3,2,1\}$.\end{remark}
\begin{remark}\label{bass}
Given a basis $e_1,e_2,e_3,e_4$  for  the space $End(V)$  of linear operators on $V$  one has the {\em dual basis} $f_i,\ i=1,\cdots,4$ for the trace form $tr(ab)$. That is  4  operators satisfying $tr(e_if_j)=\delta^i_j$. Then the operator $(1,2):V\otimes V\to V\otimes V$ can be written as $$(1,2)=e_1\otimes f_1+e_2\otimes f_2+e_3\otimes f_3+e_4\otimes f_4.$$  Any involution  being product of elements $(i,j)$ can the be expressed using this Formula in term of the basis.

\end{remark} We have some freedom in the choice of the basis. The   most common is the basis by matrix units $e_{i,j}$    in which
$$(1,2)=e_{1,1}\otimes e_{1,1}+e_{1,2}\otimes e_{2,1}+e_{2,1}\otimes e_{1,2}+e_{2,2}\otimes e_{2,2}. $$
In particular in Physics are widely used the {\em Pauli matrices}.
$$\sigma_0:=\begin{vmatrix}
1 &0\\ 0&1 
\end{vmatrix},\sigma_x:=\begin{vmatrix}
0&1\\1&0
\end{vmatrix},\sigma_y:=\begin{vmatrix}
0&-\ii\\\ii&0
\end{vmatrix},\sigma_z:=\begin{vmatrix}
1 &0\\0  &-1 \end{vmatrix} $$
$$\frac 12\begin{vmatrix}
1+a&b-\ii c\\b+\ii c&1-a
\end{vmatrix}=\frac 12(I+b\sigma_x+c\sigma_y+a\sigma_z). $$ They are equal to the dual basis up to a scaling by $\frac 12$  so that:
$$(1,2)=\frac12(\sigma_0\otimes \sigma_0+\sigma_x\otimes \sigma_x +\sigma_y\otimes \sigma_y+\sigma_z\otimes \sigma_z). $$
\bigskip

The proof of this Theorem is algorithmic. We give an algorithm which, given as input a permutation $\sigma\in S_n$  produces a linear combination of   elements in $S$ which in $\Sigma_n$ equals to $\sigma$.  
\section{The algorithm} Usually we write the permutations in their cycle structure.
Let us start with the basic antisymmetrizer which vanishes in $\Sigma_3$. 
$$A=(1,2,3)+(1,3,2)-(1,2)-(1,3)-(2,3)+1$$
\begin{equation}\label{uno}
(1,2,3)+(1,3,2)=(1,2)+(1,3)+(2,3)-1.
\end{equation}  First remark  that in $S_3$  all permutations are special, moreover
 \begin{equation}\label{ra}
(1,3,2)=-(1,2,3)+(1,2)+(1,3)+(2,3)-1  .
\end{equation} so a 3--cycle can be normalised. In $S_4$  we have the 4-cycles which are not special and we have to  write them as linear combination of special permutations in $\Sigma_4$.  Notice that  if we can do this for a single cycle  we can do it for all cycles, since permutations of the same cycle structure are conjugate and clearly the space spanned by special permutations is closed under conjugation.\medskip

For any $n>3$  we have the natural embedding of $S_3$ in $S_n$ as the permutations on the first 3 elements. This  induces an embedding of $A$ in the algebra of the symmetric group of $S_n$ which we denote by $An$.  This element   vanishes in the swap algebra $\Sigma_n$. and in $\C[S_n]$  generates the ideal of relations for $\Sigma_n$. 
Thus in $\Sigma_4$ we have the vanishing of 
$$(2,4)A4= (4, 2, 3, 1)+(3, 4, 2, 1)-(4, 2, 1)-(3, 1) (4, 
   2)-(3, 4, 2)+(2,4) $$   
$$(3,4)A4(2,4)=(2, 3, 1)+(4, 1)(2,3) - (2, 3, 4, 1)- (4, 2, 3, 
   1)- (2,3) +(3, 4, 2)$$
     $$A4(3,4)=(2, 3, 4, 1)+(3, 4, 2, 1)- (1,2) ( 3,4)- (3, 4, 
   1)-(3, 4, 2)+( 3,4). $$  
     We then have, in $\Sigma_4$,  $0=A4(3,4)-(3,4)A4(2,4)$  that is     

     \begin{equation}\label{pp}
-2(2, 3, 4, 1)=(3, 4, 2, 1)+(4, 2, 3, 
   1)- (1,2) ( 3,4)- (3, 4, 
   1)\end{equation}$$-2(3, 4, 2)+( 3,4)-(2, 3, 1)-(4, 1)(2,3) +(2,3) $$
From the vanishing of $(2,4)A4$  we deduce:
   $$ (4, 2, 3, 1)+(3, 4, 2, 1)=(4, 2, 1)+(3, 1) (4, 
   2)+(3, 4, 2)-(2,4) $$
   Substituting in \eqref{pp}  we deduce:

\begin{equation}\label{q}2(1,2, 3, 4)=\end{equation} 
$$\!\! \!\! \!\! \!\!  \!\! \!\!  (1,4 )(2,3) +(1,2) ( 3,4)-(1,3 ) (4, 
   2) -(1,4, 2 )+(1,3, 4)+(2,3, 4)+(1,2, 3)+(2,4)- ( 3,4)- (2,3) .$$

 %
%
%
%
%
%
%
%
%

%
%

The term  $(4,2,1)$ is not normalised, but it can be rewritten using Formula \eqref{ra}.                 
   $$-(4, 2, 1)=(1,2,4)-(1,2)-(1,4)-(2,4)+1     $$
\begin{equation}\label{q}2(1,2, 3, 4 )=\end{equation} 
$$\!\! \!\! \!\! \!\!  \!\! \!\!  (1,4)(2,3) +(1,2) ( 3,4)-(1,3) (4, 
   2) +(1,2,4)+(1,3, 4 )+(2,3, 4)+(1,2, 3 ) -(1,2)-(1,4)- ( 3,4)- (2,3)  +1.$$
 Since all 4 cycles are conjugate we deduce that  statement (1) is true for $S_4$.\smallskip
 
 Now notice the following  general fact: consider two cycles  $(a,A),\  (a,B)$  of lengths $h,k$ respectively where $A$ and $B$ are strings of integers of lengths $h-1,k-1$ respectively and disjoint. Then their product is the cycle of length $h+k-1$:
 \begin{equation}\label{id}
( a,B)(a,A)= (a,A,B),\quad e.g.\  (1,2,3)(1,5,4,6)=(1,5,4,6,2,3).
\end{equation}
 Thus take a cycle of length $p>4$  and, up to conjugacy we may take  \begin{equation}\label{ilc}
\mathtt c_p:=(1,2,3,4,5,\ldots,p)=(1,5,\ldots,p)(1,2,3,4) .
\end{equation} In $\Sigma_p$ we have thus that  $2\mathtt c_p$  equals  $(1,5,\ldots,p)$ times the expression of Formula  \eqref{q}. 
 
  But then applying again  Formula \eqref{id} we see that the resulting formula is a sum of permutations on $p$ elements which are {\bf not} full cycles.  
  
  By iterating then the operation on the cycles  of length $\ell$ with $4\leq \ell\leq p-1$ we have a preliminary.
 \begin{proposition}\label{se}
The cycle  $\mathtt c_p$ (formula \eqref{ilc})  is a linear combination in $\Sigma_p$  of elements which contain only cycles of length 1,2,3.

Hence $\Sigma_n$  is spanned by  permutations  which contain only cycles of length 1,2,3. \end{proposition}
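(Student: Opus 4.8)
The plan is to set up an induction that repeatedly applies the four-cycle reduction \eqref{q}, shortening one cycle of length $\ge 4$ at a time. The engine is the factorization \eqref{ilc}, which writes $\mathtt c_p$ as the product of $(1,2,3,4)$ with the single cycle $(1,5,\ldots,p)$, the two sharing only the element $1$. Substituting the expression \eqref{q} for $2(1,2,3,4)$ and multiplying on the left by $(1,5,\ldots,p)$ presents $2\mathtt c_p$ as a fixed linear combination of products $(1,5,\ldots,p)\,\tau$, as $\tau$ runs over the permutations (and the identity) on $\{1,2,3,4\}$ occurring on the right-hand side of \eqref{q}, each of which has all cycles of length $\le 3$.

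First I would check that no product $(1,5,\ldots,p)\,\tau$ is a full $p$-cycle; in fact each has all cycles of length $\le p-1$. The reason is that $\tau$ meets $(1,5,\ldots,p)$ only in the element $1$: every cycle of $\tau$ other than the one through $1$ is supported on $\{2,3,4\}$ and hence disjoint from $(1,5,\ldots,p)$, so it persists untouched as a \emph{spectator} cycle of length $\le 3$. If $\tau$ fixes $1$ the factor $(1,5,\ldots,p)$ survives as a cycle of length $p-3$; if the cycle of $\tau$ through $1$ has length $t\le 3$, then by the merge identity \eqref{id} it fuses with $(1,5,\ldots,p)$ into a single cycle of length $(p-3)+t-1=p-4+t\le p-1$. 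In either case every cycle has length at most $p-1$, so $2\mathtt c_p$, and hence $\mathtt c_p$, is a combination of permutations with no cycle of length $p$; this makes the appeal to \eqref{id} in the text quantitative. Any $3$-cycle that comes out non-increasing is renormalized through \eqref{ra}, exactly as was done to pass to the final form of \eqref{q}.

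To finish I would iterate the reduction on every cycle of length $\ge 4$, invoking conjugation-invariance. Since permutations of a fixed cycle type are conjugate and the span of the special permutations is conjugation-closed, the identity just obtained transports to express any single $\ell$-cycle $(a_1,\ldots,a_\ell)$ with $4\le\ell\le p$ as a combination of permutations whose cycles on $\{a_1,\ldots,a_\ell\}$ all have length $\le\ell-1$. Multiplying such an identity by the product of the remaining, disjoint, cycles of a given $\sigma\in S_n$ merely carries those cycles along as spectators, rewriting $\sigma$ as a combination of permutations in which the chosen $\ell$-cycle is broken up and all other cycles are unchanged. Termination I would control with the monovariant $W(\sigma)=\#\{\,i : i\ \text{lies in a cycle of}\ \sigma\ \text{of length}\ \ge 4\,\}$: one reduction replaces an $\ell$-cycle ($\ell\ge4$) by cycles of length $\le\ell-1$ on the same $\ell$ points, so the contribution of those points to $W$ drops from $\ell$ to at most $\ell-1$ while the spectators are unaffected, whence $W$ strictly decreases. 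Being a non-negative integer, $W$ reaches $0$ after finitely many steps, at which point every term has all cycles of length $\le 3$; this proves the statement for $\mathtt c_p$ and, applied to an arbitrary $\sigma$, the spanning assertion for $\Sigma_n$.

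The one delicate point --- and the main obstacle --- is precisely this well-foundedness. A single reduction produces products of several cycles, and one must be certain that shortening one long cycle never manufactures a longer one elsewhere. The spectator description guarantees this, because the reduction acts only on the support of the cycle being shortened and leaves the others rigid; the monovariant $W$ then forces the induction to terminate. Everything that remains is the routine bookkeeping of expanding the products $(1,5,\ldots,p)\,\tau$ via \eqref{id} and normalizing stray $3$-cycles via \eqref{ra}.
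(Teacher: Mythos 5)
Your proposal follows the paper's own route exactly: the factorization \eqref{ilc}, substitution of the expansion \eqref{q} of $2(1,2,3,4)$, the merge identity \eqref{id} to see that each resulting term consists of one cycle of length at most $p-1$ together with spectator cycles of length at most $3$, and iteration via conjugation-invariance. Your only addition is the explicit monovariant $W$ certifying termination, which the paper leaves implicit in the phrase ``iterating the operation on the cycles of length $\ell$ with $4\le\ell\le p-1$''; this is a welcome precision, not a different argument.
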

{\bf Example}\quad For $p=5,6$  we have for $2\mathtt c_p$ the formula  obtained from  Formula  \eqref{q}:
    \begin{align*} &2(1,2,3,4,5)\stackrel{\eqref{ilc}}=2(1,5)(1,2,3,4)=\\
&(  1,4,5)(2,3) +( 1,2, ,5) ( 3,4)-( 1,3,5) (4, 
   2) -(1,4, 2,  5)+(1,3, 4,   5)\\&+(3, 4, 2)(1,5)+(1,2, 3,   5)+(2,4)(1,5)- ( 3,4)(1,5)- (2,3) (1,5) 
\end{align*}
    
In the previous formula appear three  4-cycles, for which we can apply  Formula   \eqref{q} (see in the appendix, the expanded Formula \eqref{15}).

Notice that the final Formula  must be  invariant under conjugation by powers of the cycle, but this only up to the relations  in $\Sigma_n$.   
      \begin{equation} \label{sei} 2(1,2,3,4,5,6) \stackrel{\eqref{ilc}}=2(1,5,6)(1,2,3,4)=\end{equation}
  $$(  1,4,5,6)(2,3) +(  1,2,5,6) ( 3,4)-(  1,3,5,6) (4, 
   2) -(1,4, 2,  5,6)+(1,3, 4,   5,6)$$$$+(3, 4, 2)(1,5,6)+(1,2, 3,  5,6)+(2,4)(1,5,6)- ( 3,4)(1,5,6)- (2,3) (1,5,6) .
$$

Of course in the previous formulas appear 4-cycles, for which we can apply  Formula   \eqref{q}, and then 5-cycles, for which we can apply the final formula developed before.  Notice now that in Formula  \eqref{sei}  all terms are either special or can be expanded into a linear combination  of special elements, using the formulas of 4 and 5--cycles, except the term   $(3, 4, 2)(1,5,6)$.  \medskip

In order to  prove  Theorem \ref{main}  using Proposition \ref{se} it is enough to prove that, in $S_6$,  a permutation of type $3,3$  can be developed as linear combination of special elements, since then we apply recursively this to a product of $k$ disjoint 3-cycles.  If $k$ is even we replace them all and if odd we remain with only one  3-cycle which can be normalized if necessary using Formula \eqref{ra}.\smallskip

The computation in $S_6$ in principle is similar to that in $S_4$  but now we have to handle a priori many more relations and I had to be assisted by the software "Mathematica" in order to discover the needed relations.  

Let me sketch what I did.
 \subsection{The computation}

  A set of relations for $\Sigma_6$  can be obtained from the antisymmetrizer $A6$  by multiplication to the left and right by the 720 permutations.  Actually it is not necessary  to use all permutations since there are $36\times 6$  pairs which stabilyze $A6$ up to sign
  

Finally using these reductions we have 2400  relations each a sum of 6 permutations, of which 3 even and 3 odd.

  Each  6--cycle $c$,  appearing in these relations, needs to be developed by using the appropriate conjugate of Formula \eqref{sei} by the permutation which has as string the same form of the cycle $c$  and which conjugates the standard 6 cycle into $c$.

  So  a 6 cycle  is replaced, using a conjugate  of  formula \eqref{sei},  by a permutation of type $3,3$  plus a sum of special terms.  In this  way we obtain 2400 relations  which, by inspection contain either  0, 2 or 3 permutations of type $3,3$ and with the remaining terms special.\smallskip
  
    The  ones with 0, 2   permutations of type $3,3$ are  linear combinations of special   permutations  and cannot be used. \smallskip
  
  Remain 360  relations containing 3 permutations of type $3,3$,   arising from relations with   two  6--cycles   and one  permutations of type $3,3$.
\begin{remark}\label{red} There are  40  permutations of type $(3,3 )$  but using  Formula \eqref{uno}  
 we can  normalise these elements. 
 
 If a 3 cycle  $(a,b,c)$ is not strictly increasing (up to cyclic equivalence) we can replace it  by a strictly increasing    cycle    introducing a sign and adding   some special  permutations applying a conjugate of formula \eqref{uno}.   
 
 We then are reduced to 10  normalised  permutations of type $(3,3 )$.  
 
 We have several relations involving   these normalised permutations plus special elements and we have to eliminate in one relation  all  permutations of type $(3,3 )$ except  one, thus obtaining the desired relation.

\end{remark}    
  \subsection{The useful relations\label{ur}}
  Surprisingly  in order to obtain the desired relation  only the following 2 suffice:

\scriptsize  
$$\!\!\!\!\!\!\!\!\!\!\!\!\!\!\!\!\!\!\ 
\begin{array}{cc}
(5,6,1)( 3,4) \text{A6} = \\\left(
1, 2 , 4 , 3 , 5 , 6   \right)\!+\!\left(
 1,4 , 3 , 2 , 5 , 6    
\right)-(2,5,6,1)( 3,4) -(4,3,5,
   6,1)- (
 5 , 6 , 1)(
 4 , 3 , 2 )\!+\!(5,6,1)( 3,4)  
    \\\\ (6,1)(4,5,3)  \text{A6}=\\
\left(
 1,2 , 4 , 5 , 3 , 6  \right) + \left( 1,4 , 5 , 3 , 2 , 6 \right)-\! (
 2 , 6 , 1)(
 4, 5 , 3) -(4,5,3,6,1) -(6,1)
   (4,5,3,2)  + (6,1)(4,5,3)   
   \\
    \end{array}
 $$\normalsize

\normalsize

In Formula \eqref{sei}  the contribution  to the expansion of $2(1,2,3,4,5,6)$ of an element of type $3,3$ is   
   $+(3, 4, 2)(1,5,6)$.  
   
   Therefore the contributions of type $3,3$  of the 4 cycles of length 6 appearing in the previous Formulas are
   obtained by conjugating $ (3, 4, 2)(1,5,6)$  with the permutation which has as string the same form of the cycle.       $$\!\!\!\!\!\!\!\!\!\!\!\!\!\!\!\!\!\!\ 
\begin{array}{cc}
2\left(
 2 , 4 , 3 , 5 , 6 , 1 \right)= (3, 5, 4)(2,6,1)+\cdots ,\quad  2\left(
 4 , 3 , 2 , 5 , 6 , 1  
\right)=  (2, 5, 3)(4,6,1) +\cdots  \\
2\left(
 2 , 4 , 5 , 3 , 6 , 1\right)\!\!=(5, 3, 4)(2,6, 1)+\cdots ,\quad  2\left( 4 , 5 , 3 , 2 , 6 , 1\right)
=(3, 2, 5)(4,6,1)+\cdots \\
\end{array}
 $$

%

 \smallskip

So, by Remark \ref{red},  the previous elements  can be written in $\Sigma_6$ as
   $$\!\!\!\!\!\!\!\!\!\!\!\!\!\!\!\!\!\!\ 
\begin{array}{cc}
2\left(
 2 , 4 , 3 , 5 , 6 , 1 \right)= -(3, 4, 5)(1,2,6)+\cdots ,\quad  2\left(
 4 , 3 , 2 , 5 , 6 , 1  
\right)= - (2, 3, 5)(1,4,6) +\cdots \\
2\left(
 2 , 4 , 5 , 3 , 6 , 1\right)\!\!=(  3, 4,5)(1,2,6)+\cdots ,\quad  2\left( 4 , 5 , 3 , 2 , 6 , 1\right)
=-(2, 3, 5)(1,4,6)+\cdots  
\end{array}
 $$
where the $\cdots$ represent special elements.\smallskip

Therefore the previous 2 relations  multiplied by 2  are of the form

 \begin{equation}\label{prf}
\begin{array}{cc}
  -(3, 4, 5)(1,2,6)- (2, 3, 5)(1,4,6) +2(2,3,4)(1,5,6)+\cdots  \\\\
\quad (  3, 4,5)(1,2,6)  -(2, 3, 5)(1,4,6)- 2(3,4,5)(1,2,6)+\cdots \\
\end{array}
\end{equation} 
Subtracting the second from the first  one has the desired Formula:
$$ 0=2(2,3,4)(1,5,6)+\cdots   $$  
a relation with a single permutation $2(2,3,4)(1,5,6)$ of type $(3,3)$  and the remaining elements special. 

This gives the desired expression, which is explicited in the Appendix.

\section{Comments}\subsection{The Formula $\dim \Sigma_n=C_n$\label{Ca}}\qquad \smallskip

The $n^{th}$  Catalan number is $\frac 1{n+1}\binom{2n}n$. By the {\em hook Formula}  it is easily seen that this is the dimension of the irreducible representation of $S_{2n}$  associated to a Young diagram with two rows  of length $n$.  This in turn appears  as  the isotypic component of the $SL(2)$ invariants in $V^{\otimes 2n},\ \dim V=2$.  

By identifying $V$ with $V^*$  as $SL(2)$ representations we have an $SL(2)$ linear isomorphism between  $ End  (V^{\otimes  n})= V^{\otimes  n}\otimes V^{*\otimes  n}  $ and  $V^{\otimes 2n}$ which induces a linear isomorphism with the respective invariants $$\Sigma_n=End_{SL(V)}  V^{\otimes  n}\simeq (V^{\otimes 2n})^{SL(V)} .$$ 
\subsection{Several bases}
Several bases  formed by special elements can be obtained from Theorem \ref{main}, so a main problem is to describe    a {\em best one} by combinatorial means. The main advantage of  the special elements is that their eigenvalues  are $\pm 1$  and the  two  primitive 3-roots of 1.  They also are {\em local}  in the sense that involve only 2  tensor factors at a time, and at most   a single  instance of 3 tensor factors.

One should compare the complexity  of   the algorithm to express a permutation as linear combination of special ones  to that of   the algorithm to express a permutation as linear combination of 3-good ones.\medskip

Some remarks on the algorithm  to express a permutation as linear combination of 3-good ones.

\begin{enumerate}\item Permutations are ordered lexicographically. 
 
\item 
The 3-good permutations on $n$  elements are  in number $C_n$  the \
$n^{th}$  Catalan number.
\item  The last 3-good permutation is $n,1,2,3,\cdots,n-1$.
%
 \end{enumerate}

\medskip

The algorithm takes a permutation $\sigma$ and checks   recursively if there is a string of 3 elements decreasing. If there is not one the  permutation is 3-good. Otherwise as soon as one encounters one such sequence, by applying the antisymmetrizer on these elements one obtains  that  $\sigma$ is equivalent to a sum with signs of 5  permutations which are lexicographically less than $\sigma$.  This means that if we have already developed the previous permutations as linear combination of 3-good ones we immediately obtain the developments for $\sigma$. Notice that in this development the coefficients are all integers.
\subsection{$d\geq3$}

One may ask the same question for  $\Sigma_n(d)$ and $d\geq3$. The first problem is:

Determine the minimum $m=m(d)$ so that   $\Sigma_{m+1}(d)$ is spanned by the permutations which are NOT $m+1$--cycles.

This number $m$  has also other interesting interpretations (see  \cite{agpr} page 331  for the interesting history  of this question).

 The same $m$ is the maximum degree of the generators of invariants  of  $d\times d$ matrices.

It is also the minimum  degree for which, given an associative algebra $R$ over a field of characteristic 0, in which every element $x$ satisfies  $x^d=0$  one has $R^{m(d)}=0$.\smallskip

The known estimates for $m(d)$   are     the lower bound  $m(d)\geq \binom{d+1}2$  due to Kuzmin,  see \cite{Kuz} or  \cite{DF} and the upper bound  $m(d)\leq d^2$ due to Razmyslov,  see \cite{Raz} or \cite{agpr}.  Kuzmin conjectures that $m(d)= \binom{d+1}2$ which has been verified only for  $d\leq 4$.

 \subsection{ Transpositions}
Let us remark a simple fact
\begin{proposition}\label{tra}
The identity plus all transpositions give linearly independent operators in all $\Sigma_n(d)$.
\end{proposition}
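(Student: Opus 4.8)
The plan is to prove the independence by a direct evaluation argument on $V^{\otimes n}$, isolating each of the $1+\binom n2$ coefficients in turn rather than invoking any structure of the algebra. Fix a basis $e_1,\dots,e_d$ of $V$; I use crucially here that $d\geq 2$, since for $d=1$ every permutation already acts as the identity and the statement is plainly false. Suppose we are handed a relation
\[
c_0\,\id+\sum_{i<j}c_{ij}\,(i,j)=0
\]
holding as operators on $V^{\otimes n}$, and set the goal of showing that every coefficient vanishes.

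For each position $p\in\{1,\dots,n\}$ I would introduce the ``single defect'' tensor $w_p:=e_1\otimes\cdots\otimes e_2\otimes\cdots\otimes e_1$, with $e_2$ in the $p$-th factor and $e_1$ in all others. These $n$ tensors are distinct elements of the standard basis of $V^{\otimes n}$, hence linearly independent. The key observation is that a swap merely moves the defect: $(k,l)\,w_p=w_p$ when $p\notin\{k,l\}$, while $(k,l)$ interchanges $w_k$ and $w_l$. Applying the given relation to the vector $w_i$ therefore yields a linear combination of the $w_p$ that equals $0$, so each of its coefficients must be $0$.

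Now I read off the coefficient of $w_j$ for a fixed $j\neq i$. The identity sends $w_i$ to $w_i\neq w_j$, and a swap $(k,l)$ sends $w_i$ to $w_j$ precisely when $\{k,l\}=\{i,j\}$; so the only surviving contribution to $w_j$ comes from the transposition with support $\{i,j\}$, forcing its coefficient to vanish. Letting $i$ and $j$ range over all distinct pairs gives $c_{ij}=0$ for every $i<j$. The relation then collapses to $c_0\,\id=0$, and since $\id\neq 0$ we conclude $c_0=0$ as well.

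The computation is elementary; the one point that genuinely needs care — and the step I would actually write out — is the combinatorial bookkeeping that, among all the operators in the relation, a defect placed at position $i$ is transported to position $j$ only by the transposition interchanging $i$ and $j$. I would also note that the argument is insensitive to the convention chosen for how $S_n$ acts on the tensor factors, and that it works verbatim over $\QQ$ or over $\C$. Since no relation of the swap algebra is used, the statement holds inside every $\Sigma_n(d)$ with $d\geq 2$ simultaneously, as claimed.
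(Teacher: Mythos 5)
Your proof is correct, and it takes a genuinely different route from the paper's. The paper argues by induction on $n$: it first reduces to $d=2$, then applies the partial trace $\mathtt t_1$ (using $\mathtt t(\sigma)=2\sigma$ for $\sigma\in S_{n-1}$ and $\mathtt t(\tau(1,i))=\tau$, as in \cite{P1}) to kill the coefficients $a_{i,j}$ with $1<i<j$, and then a second partial trace $\mathtt t_n$ to handle the transpositions through $1$. You instead evaluate the putative relation on the single--defect vectors $w_p$, and the heart of your argument is the observation that a defect at position $i$ is transported to position $j$ by exactly one operator in the list, namely $(i,j)$; equivalently, you are restricting to the weight subspace $\mathrm{span}(w_1,\dots,w_n)$, on which the transpositions act as the $n\times n$ permutation matrices of transpositions, and these have pairwise disjoint off-diagonal supports while the identity is diagonal. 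Your approach buys several things: it is non-inductive, entirely self-contained (no appeal to the partial-trace identities of \cite{P1}), and it treats all $d\geq 2$ uniformly without the reduction step; your explicit remark that $d\geq2$ is needed (and that the claim fails for $d=1$) is a point the paper leaves implicit in its phrase ``it is enough to prove this when $d=2$.'' What the paper's route buys in exchange is a further illustration of the partial-trace technique that recurs elsewhere in the text (cf.\ Remark \ref{pat}), which is thematically why it is presented that way. One tiny bookkeeping point you may as well make explicit: after concluding $c_{ij}=0$ for all $i<j$, the coefficient of $w_i$ itself in the evaluated relation is $c_0$ plus the sum of the $c_{kl}$ with $i\notin\{k,l\}$, so the vanishing of $c_0$ uses the already-established vanishing of those $c_{kl}$; as you state it (``the relation collapses to $c_0\,\id=0$'') this is implicitly what happens, and it is fine.
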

\begin{proof}  Of course it is enough to prove this when $d=2$, we do it by induction.   Assume we have a relation
$$0=a \cdot I_n
+\sum_{i<j}a_{i,j}(i,j)$$ with $I_n$ the identity   on $V^{\otimes n}$.
Apply the partial trace $\mathtt t_1=\mathtt t$ on the first factor as in \cite{P1}  
$$\mathtt t: X_1\otimes X_2\otimes \ldots \otimes X_{n-1}\otimes X_n \mapsto tr( X_1) X_2 \otimes \ldots \otimes X_{n-1 }\otimes X_n.$$ As proved in that paper if $S_{n-1}$  is the subgroup of $S_n$ fixing 1 we have
$$\mathtt t(\sigma)=2\sigma,\ \forall \sigma\in S_{n-1}\quad  \mathtt t(\tau (1,i))=\tau,\  \forall \tau\in S_{n-1}$$
So we have in $V^{\otimes n-1}$.
$$0=b\cdot I_{n-1}
+2\sum_{1<i<j}a_{i,j}(i,j),\quad b=2a+\sum_{1<j}a_{1,j}.$$
By induction we have  $b=0,\ a_{i,j}=0,\ \forall 1<i<j$. 

So the relation is, setting  $a_j:=a_{1,j}$, among  $I_n$ and the $n-1$  transpositions  $(1,i),\ i=2,n$.
Apply the partial trace $\mathtt t_n$ on the last factor obtaining in $V^{\otimes n-1}$ that $ a_{1,j}=0,\  \forall j<n$ and finally  a relation among $I_n$ and $(1,n)$  which does not exist.

\end{proof}

\subsection{Appendix  {\em explicit formulas}}
In the formula for $p=5$  we multiply by 2 
  \begin{align*} &4(1,2,3,4,5)\stackrel{\eqref{ilc}}=4(1,5)(1,2,3,4)=\\
&2(  1,4,5)(2,3) +2( 1,2, 5) ( 3,4)-2( 1,3,5) (4, 
   2) -2(1,4, 2,  5)+2(1,3, 4,   5)\\&+2(3, 4, 2)(1,5)+2(1,2, 3,   5)+2(2,4)(1,5)- 2( 3,4)(1,5)-2 (2,3) (1,5) 
\end{align*}
  and develop the 4--cycles we obtain

   \scriptsize$$-2(1,4, 2,  5)+2(1,3, 4,   5) +2(1,2, 3,   5)=$$
$$ (1,2) ( 3,5)+(1,2 ) (4,5)-(1,3 ) (2,5 
    ) +(1,3) ( 4,5)-(1,4) ( 2,5)-(1,4 ) (3,5)- (1,5 )(2,4)  + (1,5)(3,4)  + (1,5 )(2,3) $$$$  +(2,4, 5)+ (1,2,4  )+(1,  3, 5)+(3,4, 5)+(1,3, 4) +(1,3, 5)+(2,3, 5)+(1,2, 3)$$$$  - (2,3) -2(4,5)    - (3,4)   - 2(1,2   ) - ( 2,4  )-(1,  3 ) -(   3, 5)  -(1,  5) +3  .$$

%
\normalsize
   So the formula
   
   \scriptsize

  \begin{equation}\label{15}
4(1,2,3,4,5)\stackrel{\eqref{ilc}}=4(1,5)(1,2,3,4)=
\end{equation}
   $$ (1,2) ( 3,5)+(1,2 ) (4,5)-(1,3 ) (2,5 
    ) +(1,3) ( 4,5)-(1,4) ( 2,5)-(1,4 ) (3,5)+ (1,5 )(2,4)  - (1,5)(3,4)  -  (2,3) (1,5)    $$$$  +(2,4, 5)+ (1,2,4  )+(1,  3, 5)+(3,4, 5)+(1,3, 4) +(1,3, 5)+(2,3, 5)+(1,2, 3)$$$$  - (2,3) -2(4,5)    - (3,4)   - 2(1,2   ) - ( 2,4  )-(1,  3 ) -(   3, 5)  -(1,  5) +3   $$
$$+2(  1,4,5)(2,3) +2( 1,2, 5) ( 3,4)-2( 1,3,5) (4, 
   2)  +2(2,3, 4 )(1,5)  
$$

%
%
%
%

   \normalsize\smallskip

    The  formula \eqref{sei}  for $p=6$
     \begin{equation}  8(1,2,3,4,5,6) \stackrel{\eqref{ilc}}=8(1,5,6)(1,2,3,4)=\end{equation}
  $$4(  1,4,5,6)(2,3) +4(  1,2,5,6) ( 3,4)-4(  1,3,5,6) (4, 
   2) -4(1,4, 2,  5,6)+4(1,3, 4,   5,6)$$$$+4(3, 4, 2)(1,5,6)+4(1,2, 3,  5,6)+4(2,4)(1,5,6)-4 ( 3,4)(1,5,6)-4 (2,3) (1,5,6) .
$$
So expanding the terms containing 4  and 5-cycles: we finally have 

\bigskip

\scriptsize
 \begin{equation} \label{sei1} 8(1,2,3,4,5,6) \stackrel{\eqref{ilc}}=8(1,5,6)(1,2,3,4)=\end{equation}

   $ 5 -2(1,2)-3(1,3)+(1,6)+(2,3)-2(1,4)(2,3)-3(1,6)(2,3)-3(2,4)+2(1,3)(2,4)-(1,3)(2,6)-(1,4)(2,6)+(3,4)-2(1,2)(3,4)-3(1,6)(3,4)-2(2,5)(3,4)+2(1,6)(2,5)(3,4)-2(1,5)(2,6)(3,4)-2(3,5)+2(2,4)(3,5)-2(1,6)(2,4)(3,5)-(3,6)+(1,2)(3,6)-(1,4)(3,6)-2(1,5)(3,6)+2(1,5)(2,4)(3,6)+(1,6)(4,2)-4(1,6)(4,5)-2(2,3)(4,5)+2(1,6)(2,3)(4,5)-2(4,6)+(1,2)(4,6)+(1,3)(4,6)-2(1,5)(2,3)(4,6)-2(5,6)+2(1,3)(5,6)-2(2,3)(5,6)+2(1,4)(2,3)(5,6)+2(2,4)(5,6)-2(1,3)(2,4)(5,6)-2(3,4)(5,6)+2(1,2)(3,4)(5,6)+(1,2,3)+(1,2,4)+2(3,4)(1,2,5)-2(1,2,6)+2(3,4)(1,2,6)+2(3,5)(1,2,6)+2(4,5)(1,2,6)+(1,3,4)+2(1,3,5)-2(2,4)(1,3,5)+2(1,3,6)-2(2,4)(1,3,6)-2(2,5)(1,3,6)+2(4,5)(1,3,6)+2(2,3)(1,4,5)+2(1,4,6)+2(2,3)(1,4,6)-2(2,5)(1,4,6)-2(3,5)(1,4,6)+2(1,6)(2,3,5)+(2,3,6)+2(1,6)(2,4,5)+(2,4,6)+2(3,4)(2,5,6)+2(1,6)(3,4,5)+(3,4,6)+2(3,5,6)-2(2,4)(3,5,6)+2(2,3)(4,5,6)+4(1,5,6)(2,3,4)$
\bigskip
 
\normalsize
     Hence the final formula for the permutation of type 3,3 in term of special elements obtained by the method explained in section \ref{ur} is:
\scriptsize     
\begin{equation}\label{330}
8(4,3,2)(5,6,1)=
\end{equation}
   $(1,2)-2(1,3)+(1,5)+4(1,6)-3(2,3)+2(1,5)(2,3)-2(1,6)(2,3)-3(2,4)+2(1,5)(2,4)-2(1,6)(2,4)+7(2,5)-2(1,3)(2,5)-4(1,4)(2,5)-6(1,6)(2,5)-2(2,6)-2(1,3)(2,6)+4(1,4)(2,6)+2(1,5)(2,6)+(3,4)-2(1,5)(3,4)-2(1,6)(3,4)-4(2,5)(3,4)-4(3,5)+4(1,4)(3,5)+4(2,4)(3,5)+4(2,6)(3,5)-4(1,4)(2,6)(3,5)-4(3,6)+4(1,2)(3,6)-4(1,4)(3,6)-4(2,5)(3,6)+4(1,4)(2,5)(3,6)-2(4,5)+2(1,2)(4,5)+2(1,3)(4,5)+4(2,3)(4,5)+4(1,3)(2,6)(4,5)+4(3,6)(4,5)-4(1,2)(3,6)(4,5)-2(1,2)(4,6)+2(1,3)(4,6)-4(1,3)(2,5)(4,6)+4(1,2)(3,5)(4,6)+6(5,6)-4(1,2)(5,6)-4(4,5)(1,2,3)+4(5,6)(1,2,3)+(1,2,5)-4(3,6)(1,2,5)-2(1,2,6)+4(4,5)(1,2,6)+2(1,3,6)+4(2,5)(1,3,6)-4(4,5)(1,3,6)-4(3,5)(1,4,2)+4(5,6)(1,4,2)+4(2,5)(1,4,3)-4(5,6)(1,4,3)-4(2,6)(1,4,5)+4(3,6)(1,4,5)-(1,5,2)-4(1,5,6)+8(3,4)(1,5,6)+2(1,6,3)-2(1,6,5)+(2,3,5)+2(2,3,6)-4(4,5)(2,3,6)+4(1,6)(2,4,3)-(2,4,5)+4(1,6)(2,4,5)-(2,5,3)+4(1,6)(2,5,3)+(2,5,4)-2(2,5,6)+2(1,4)(2,5,6)+2(2,6,4)-4(3,5)(2,6,4)-2(1,4)(2,6,5)+(3,4,5)-4(1,6)(3,4,5)+2(3,4,6)-(3,5,4)+2(1,2)(3,5,6)-2(1,4)(3,5,6)+4(2,5)(3,6,4)-2(1,2)(3,6,5)+2(1,4)(3,6,5)-2(4,5,6)-2(4,6,5)$
   \normalsize

 \subsection{  The corresponding  rules of substitution:}\quad\bigskip
   
In writing an algorithm  to transform any given permutation into a sum of special  elements one thus uses the following {\em substitutional rules}. From:
\begin{equation}\label{q1}2(a,b, c, d)=\end{equation} 
$$\!\! \!\! \!\! \!\!  \!\! \!\!  (a,d )(b,c) +(a,b) ( c,d)-(a,c ) (d, 
   b) -(a,d, b )+(a,c, d)+(b,c, d)+(a,b, c)+(b,d)- ( c,d)- (b,c) .$$
and:\begin{equation}\label{id1}
( a,B)(a,A)= (a,A,B),\quad e.g.\  (1,2,3)(1,5,4,6)=(1,5,4,6,2,3)
\end{equation}
   for a cycle $\mathtt c :=(a,b,c,d,A)$ of length $n+4$, where $A$ is of length $n>0$, we have \begin{equation}\label{ilc1}
\mathtt c :=(a,b,c,d,A)=(a,A)(a,b,c,d)=  
\end{equation}
$$\!\! \!\! \!\! \!\!  \!\! \!\!  (a,d ,A)(b,c) +(a,b,A) ( c,d)-(a,c ,A) (d, 
   b) -(a,d, b ,A)$$$$+(a,c, d,A)+(b,c, d)+(a,b, c,A)+(b,d)- ( c,d)- (b,c) .$$
   
   This formula now contains only cycles  of length $<n+4$.\smallskip

By applying recursively these rules we arrive to a linear combination  in which no cycles of length $>3$ appear. Now the only reduction to be made is if  there are  pairs of 3-cycles. For these we finally  repeat the final rule:       
 \begin{equation}\label{33}
8(d,c,b)(e,f,a)=
\end{equation}$(a,b)-2(a,c)+(a,e)+4(a,f)-3(b,c)+2(a,e)(b,c)-2(a,f)(b,c)-3(b,d)+2(a,e)(b,d)-2(a,f)(b,d)+7(b,e)-2(a,c)(b,e)-4(a,d)(b,e)-6(a,f)(b,e)-2(b,f)-2(a,c)(b,f)+4(a,d)(b,f)+2(a,e)(b,f)+(c,d)-2(a,e)(c,d)-2(a,f)(c,d)-4(b,e)(c,d)-4(c,e)+4(a,d)(c,e)+4(b,d)(c,e)+4(b,f)(c,e)-4(a,d)(b,f)(c,e)-4(c,f)+4(a,b)(c,f)-4(a,d)(c,f)-4(b,e)(c,f)+4(a,d)(b,e)(c,f)-2(d,e)+2(a,b)(d,e)+2(a,c)(d,e)+4(b,c)(d,e)+4(a,c)(b,f)(d,e)+4(c,f)(d,e)-4(a,b)(c,f)(d,e)-2(a,b)(d,f)+2(a,c)(d,f)-4(a,c)(b,e)(d,f)+4(a,b)(c,e)(d,f)+6(e,f)-4(a,b)(e,f)-4(d,e)(a,b,c)+4(e,f)(a,b,c)+(a,b,e)-4(c,f)(a,b,e)-2(a,b,f)+4(d,e)(a,b,f)+2(a,c,f)+4(b,e)(a,c,f)-4(d,e)(a,c,f)-4(c,e)(a,d,b)+4(e,f)(a,d,b)+4(b,e)(a,d,c)-4(e,f)(a,d,c)-4(b,f)(a,d,e)+4(c,f)(a,d,e)-(a,e,b)-4(a,e,f)+8(c,d)(a,e,f)+2(a,f,c)-2(a,f,e)+(b,c,e)+2(b,c,f)-4(d,e)(b,c,f)+4(a,f)(b,d,c)-(b,d,e)+4(a,f)(b,d,e)-(b,e,c)+4(a,f)(b,e,c)+(b,e,d)-2(b,e,f)+2(a,d)(b,e,f)+2(b,f,d)-4(c,e)(b,f,d)-2(a,d)(b,f,e)+(c,d,e)-4(a,f)(c,d,e)+2(c,d,f)-(c,e,d)+2(a,b)(c,e,f)-2(a,d)(c,e,f)+4(b,e)(c,f,d)-2(a,b)(c,f,e)+2(a,d)(c,f,e)-2(d,e,f)-2(d,f,e)$ 
\smallskip

\noindent until we arrive at a linear combination of special elements.
 \bibliographystyle{amsalpha}
 \end{document}